\documentclass[11pt,a4paper]{article}
\usepackage[left=27mm,right=27mm,top=25mm,bottom=26mm]{geometry}
\usepackage[T1]{fontenc}
\usepackage{amsmath,amsthm,mathtools}
\usepackage{newtxtext,newtxmath}
\usepackage{microtype,setspace,needspace,fancyhdr}
\usepackage[hidelinks]{hyperref}
\hypersetup{pdftitle={Uniform Asymptotics for Andrews-Beck Partition Statistics},pdfsubject={Uniform asymptotic formulas for the Andrews-Beck partition statistics NT(m,n) and M_omega(m,n)},pdfauthor={Renrong Mao}}
\fancypagestyle{plain}{\fancyhf{}\fancyfoot[C]{\thepage}}
\newtheorem{theorem}{Theorem}[section]
\newtheorem{lemma}[theorem]{Lemma}
\newtheorem{proposition}[theorem]{Proposition}
\newtheorem{corollary}[theorem]{Corollary}
\numberwithin{equation}{section}

\DeclareMathOperator{\crk}{crank}
\DeclareMathOperator{\sech}{sech}
\newcommand{\NT}{\operatorname{NT}}
\newcommand{\MW}{M_{\omega}}
\newcommand{\coeff}[1]{[q^{#1}]}
\title{\vspace{-1.6em}\bfseries Uniform Asymptotics for Andrews-Beck Partition Statistics}
\author{}
\date{}

\newcommand{\no}{\nonumber}
\author{Renrong Mao}
\begin{document}
\maketitle
\vspace{-1cm}
\begin{center}
	Department of Mathematics,\\
	Soochow University, \\
	Suzhou, 215006,
	People's Republic of China\\[6pt]

	Email: rrmao@suda.edu.cn

\end{center}
\noindent\textbf{Abstract:}
Let $\NT(m,n)$ be the total number of parts in the partitions of $n$ with rank $m$, and let $\MW(m,n)$ be the total number of ones in the partitions of $n$ with crank $m$. Using the uniform generalized-rank estimate of Liu and Zhou, we establish asymptotic formulas for $NT(m,n)$ and $\MW(m,n)$. Put $\beta_n=\pi/\sqrt{6n}$ and $L_n=\log(1/\beta_n)$. For every fixed $A>0$, uniformly for $|m|\le A\sqrt n$, we prove
\begin{align*}
\NT(m,n)&=\frac{p(n)L_n}{4}\sech^2\!\left(\frac{\beta_nm}{2}\right)
+O_A\!\bigl(p(n)\log L_n\bigr)
\intertext{
and}
\MW(m,n)&=\frac{p(n)}4\log(1+e^{-\beta_nm})
\sech^2\!\left(\frac{\beta_nm}{2}\right)
+O_A\!\bigl(p(n)\sqrt{\beta_nL_n}\bigr).
\end{align*}

\noindent\textbf{Keywords:} integer partitions; Andrews-Beck partition statistics; rank; crank; uniform asymptotics.

\noindent\textbf{Mathematics Subject Classification (2010):} 05A16; 05A17; 11P82

\section{Introduction}
	A partition of a positive integer $n$ is a sequence of non-increasing positive integers whose sum equals $n$ and
$p(n)$ is defined to be the number of partitions of $n$ while $p(0):=1$.
Ramanujan \cite{ra1} found the famous congruences:
\begin{align}
	p(5n+4)&\equiv 0\pmod 5\label {1i1}\\
	p(7n+5)&\equiv 0\pmod 7\label {1i2}\\
	p(11n+6)&\equiv 0\pmod{11}\label {1i3}.
\end{align}

To provide combinatorial interpretations for Ramanujan's congruences, Dyson \cite{Dyson1944} defined the rank of a partition to be the largest part minus the number of parts.
Let $N(m, n)$ denote the number of partitions of $n$ with rank $m$ and 
\begin{align}
	N(m,k,n):=\sum_{t\equiv m\pmod{k}}N(t,n).\no
\end{align} Dyson conjectured that
\begin{align}
	N(m,5,5n+4)&=\frac{p(5n+4)}{5}\label{1i4}
	\intertext{for all $0\leq m\leq 4$ and for $0\leq j\leq 6$}
	N(j,7,7n+5)&=\frac{p(7n+5)}{7}\label{1i5} .
\end{align}
Congruences \eqref{1i1} and \eqref{1i2} follow immediately from \eqref{1i4} and \eqref{1i5}, respectively.
Equations \eqref{1i4} and \eqref{1i5} were first proved by Atkin and Swinnerton-Dyer \cite{ASD}.

Dyson also noted that ranks of partitions could not explain \eqref{1i3}. Thus, he conjectured the existence of another partition statistic which he called ``crank" that would
provide combinatorial interpretations of \eqref{1i1}-\eqref{1i3}. After forty-four years,
based on the work of Garvan \cite{gar},  Andrews and Garvan \cite{AG1988} finally found 
the crank of
partitions.	Write a partition of $n\ge1$ as
$\lambda=(\lambda_1,\ldots,\lambda_k)$, where
$\lambda_1\ge\cdots\ge\lambda_k\ge1$ and
$|\lambda|=\sum_{i=1}^{k}\lambda_i=n$. Set
\[
\ell(\lambda):=k,\qquad
\omega(\lambda):=\#\{i:\lambda_i=1\},\qquad
\mu(\lambda):=\#\{i:\lambda_i>\omega(\lambda)\}.
\]
Thus $\ell(\lambda)$ is the number of parts and $\omega(\lambda)$ is the number of ones. The crank of $\lambda$ is defined by
\begin{align}\label{eq:rank-crank}
	\crk(\lambda):=
	\begin{cases}
		\lambda_1,&\omega(\lambda)=0,\\
		\mu(\lambda)-\omega(\lambda),&\omega(\lambda)>0.
	\end{cases}
\end{align}
Let $M(m,n)$ count the partitions of $n$ with crank $m$ and 
\begin{align}
	M(m,k,n):=\sum_{t\equiv m\pmod{k}}M(t,n).\no
\end{align}
Then the results in \cite{AG1988,gar} imply that the crank provides combinatorial interpretations of all three 
congruences \eqref{1i1}-\eqref{1i3}, i.e. 
\begin{align*}
	M(m,5,5n+4)&=\frac{p(5n+4)}{5},\\
	M(j,7,7n+5)&=\frac{p(7n+5)}{7},\\
	M(s,11,11n+6)&=\frac{p(11n+6)}{11},
\end{align*}
for all $0\leq m\leq 4, 0\leq j\leq 6, 0\leq s<11$.

Ranks and cranks of partitions are widely studied by many authors.  Dyson \cite{Dyson1989} made the following conjecture
\begin{equation}\label{eq:Dyson-profile}
	M(m,n)\sim\frac{\beta_np(n)}4
	\sech^2\!\left(\frac{\beta_nm}{2}\right),
	\qquad \beta_n:=\frac{\pi}{\sqrt{6n}}.
\end{equation}
Bringmann and Dousse \cite{BD} proved \eqref{eq:Dyson-profile} over a growing range of $m$ by extending Wright's circle method to suitable inverse theta functions. Dousse and Mertens \cite{DM} established the corresponding asymptotic formula for ranks, using the analytic structure of the rank generating function. More recently, Liu and Zhou \cite{LZ} obtained uniform estimates for generalized rank statistics \cite{krank} that include both $N(m,n)$ and $M(m,n)$.
\begin{theorem}(\cite[Theorem~1.3]{LZ})\label{thlz}
	For fixed $k$ and $|m|=o(n^{3/4})$, there holds
\begin{equation}\label{eq:LZ}
	\frac{N_k(m,n)}{p(n)}
	=\frac{\beta_n}{4}\sech^2\!\left(\frac{\beta_nm}{2}\right)
	\left(1+O_k\!\left(\frac{n+m^2}{n^{3/2}}\right)\right).
\end{equation}
\end{theorem}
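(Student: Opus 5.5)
Theorem~\ref{thlz} is quoted from Liu and Zhou \cite[Theorem~1.3]{LZ}, and the paper uses it as a black box, so I only sketch how one would prove it. The starting point is the generating function of the generalized ranks $N_k(m,n)$. It has the Andrews--Garvan-type expansion
\begin{equation*}
\sum_{n\ge0}N_k(m,n)q^n=\frac{1}{(q;q)_\infty}\sum_{j\ge1}(-1)^{j-1}q^{\frac{j((2k-1)j-1)}{2}+|m|j}\bigl(1-q^{j}\bigr),
\end{equation*}
valid up to normalization. The cases $k=1$ and $k=2$ recover the crank and the rank. I would extract coefficients with the circle method: write
\begin{equation*}
N_k(m,n)=\frac{1}{2\pi i}\oint \frac{F_{k,m}(q)}{q^{n+1}}\,dq
\end{equation*}
on the circle $|q|=e^{-\beta_n}$, and split it into a major arc near $q=1$ and the remaining minor arcs.

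On the major arc, set $q=e^{-z}$ with $z=\beta_n+iy$ and $|y|\le c\beta_n$. Modularity gives
\begin{equation*}
1/(q;q)_\infty=\sqrt{z/2\pi}\,e^{\pi^2/6z-z/24}\bigl(1+O(e^{-4\pi^2\operatorname{Re}(1/z)})\bigr).
\end{equation*}
The theta-type sum behaves like
\begin{equation*}
\sum_j(-1)^{j-1}e^{-|m|jz}(1-e^{-jz})\,e^{-z\,Q(j)},
\end{equation*}
and I would replace the Gaussian factor $e^{-zQ(j)}$ by $1+O(zj^2)$. This gives the main term
\begin{equation*}
\sum_j(-1)^{j-1}e^{-|m|jz}(1-e^{-jz})=\frac{1-e^{-z}}{(1+e^{-|m|z})(1+e^{-(|m|+1)z})}\approx\frac{z}{4}\sech^2\Bigl(\frac{mz}{2}\Bigr).
\end{equation*}
The error is $O(z\cdot z^{-1}\cdot(\text{stuff}))$, and this is where the relative error $(n+m^2)/n^{3/2}$, which is of order $\beta_n+\beta_n^3m^2$, originates. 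I would control that expansion error by Euler--Maclaurin or Mellin applied to the alternating sum.

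The remaining step is a saddle-point evaluation. We have $\int e^{\pi^2/6z+nz}\,g(z)\,dz$ with $g(z)=\frac{z}{4}\sech^2(mz/2)\cdot\sqrt{z}$, and the saddle sits at $z=\beta_n$. Dividing by the analogous integral for $p(n)$ gives the ratio $\frac{\beta_n}{4}\sech^2(\beta_nm/2)$. Corrections come from the derivative of $\sech^2(mz/2)$ across the saddle width $\beta_n^{3/2}$ and are of relative size $m\beta_n^{3/2}\cdot m\beta_n^{3/2}\sim m^2n^{-3/2}$, which matches the claimed error. The condition $|m|=o(n^{3/4})$ is exactly what keeps this error $o(1)$. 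It also keeps $\sech^2$ from varying faster than the saddle width.

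On the minor arcs, $|1/(q;q)_\infty|$ is exponentially smaller than at $q=1$, by the standard bound
\begin{equation*}
\log|1/(q;q)_\infty|\le \frac{\pi^2}{6\beta_n}-c\,\beta_n^{-1}.
\end{equation*}
The theta sum is bounded by $O(\beta_n^{-1})$, so these arcs contribute negligibly relative to $p(n)\beta_n e^{-\beta_n|m|}$ as long as $|m|\ll n^{3/4}$.

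I expect the main obstacle to be uniformity in $m$. When $\beta_n|m|$ is large, the main term is exponentially small, of size $e^{-\beta_n|m|}$. The errors therefore have to be shown to carry the same factor, which is why they must be written relative to $\sech^2$ rather than absolutely. I would handle this by factoring $e^{-|m|z}$ out of the theta sum explicitly before estimating, and by shifting the saddle contour accordingly.
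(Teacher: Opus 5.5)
The paper does not prove this statement. It is quoted from Liu--Zhou and used only as a black box through Lemma~\ref{lem:counts}, that is, for $k\in\{1,2\}$ and $|m|\le A\sqrt n$, where $\beta_n|m|\le\pi A/\sqrt6$ and the relative error is $O_A(n^{-1/2})$. So there is no argument in the paper to compare yours with. Your text is a sketch of the standard circle-method route. The modular transformation, the minor arcs, and the shifted saddle as the source of the $m^2n^{-3/2}$ term are plausible. There is one concrete gap, at the step that must produce the $n/n^{3/2}\asymp\beta_n$ part of the error.

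A smaller slip first. The closed form should read
\[
\sum_{j\ge1}(-1)^{j-1}e^{-|m|jz}\bigl(1-e^{-jz}\bigr)=\frac{e^{-|m|z}\bigl(1-e^{-z}\bigr)}{\bigl(1+e^{-|m|z}\bigr)\bigl(1+e^{-(|m|+1)z}\bigr)}.
\]
Without the factor $e^{-|m|z}$ the right side is about $z/(1+e^{-|m|z})^2$, not $\frac z4\operatorname{sech}^2(mz/2)$.

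The gap is the replacement of $e^{-zQ(j)}$ by $1+O(zj^2)$.

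\textbf{Why the termwise replacement fails.} When $\beta_n|m|$ is small, the factor $e^{-|m|jz}$ supplies no decay. This includes $m=0$, which lies inside the range the paper needs. At $m=0$ the series $\sum_j(-1)^{j-1}(1-e^{-jz})$ does not even converge, because its terms tend to $\pm1$. It is the Gaussian that makes the sum converge, and $e^{-zQ(j)}$ is far from $1$ once $j\gg\beta_n^{-1/2}$. Consequently the termwise bound $\sum_j\min(1,|z|j^2)\min(1,|z|j)e^{-\beta_n|m|j}$ on the error diverges at $m=0$. It is also far larger than the required $O(\beta_n^2)$ whenever $\beta_n|m|$ is small. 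For $\beta_n|m|\le\beta_n^{1/2}$ it even exceeds the main term, which is $\asymp\beta_n$.

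\textbf{What is actually needed.} The true correction is $O(\beta_n^2)$ only because of high-order cancellation in the alternating sum. For $k=1$, $m=0$ the sum telescopes to $2\sum_{i\ge0}(-1)^ie^{-zi(i+1)/2}-1$. Euler--Boole summation carried to several terms then gives $\frac z4+O(z^2)$. By contrast, pairing consecutive terms and taking absolute values bounds the whole sum only by $O(\beta_n^{1/2})$. Proving such an estimate uniformly for complex $z$ on the major arc and uniformly in $m$ is the heart of the proof. The line ``$O(z\cdot z^{-1}\cdot(\text{stuff}))$'' plus an appeal to Euler--Maclaurin or Mellin does not supply it.

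Conversely, once $\beta_n|m|\ge c>0$ the termwise bound does work. There $e^{-\beta_n|m|j}$ gives geometric decay and a relative error $O_c(\beta_n)$. So for this step the regime you single out as the main obstacle, large $\beta_n|m|$, is the easy one. Its extreme part $\beta_n|m|\to\infty$ is not used anywhere in the paper.
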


For the combinatorial interpretation of $N_k(m,n)$, the reader is referred to \cite[Theorem 1.12]{krank}.
We remark that $N_1(m,n)=M(m,n)$ and $N_2(m,n)=N(m,n)$. In this paper,  applying Theorem \ref{thlz}, we obtain asymptotic formulas for Andrews-Beck partition statistics.

 Andrews \cite{AndrewsBeck} established congruences (first conjectured by Beck) for the total number parts functions associated to
ranks of partitions. Let $NT(m,n)$ denote the total number of parts in the partitions of $n$ with rank $m$ and 
\begin{align}
	NT(m,k,n):=\sum_{t\equiv m\pmod{k}}NT(t,n).\no
\end{align}
Applying  Atkin and Swinnerton-Dyer's results on rank differences modulo $5$ and $7$, Andrews obtained
\begin{align} \label{ntmod5}
	&NT(1, 5, 5n+i) - NT(4,5,5n+i) \nonumber\\&+ 2NT(2,5,5n+i) - 2NT(3,5,5n+i) \nonumber\\&\equiv 0 \pmod{5},\no
	\intertext{ for $i=1$ or $4$ and for $i=1$ or $5$,}
	&N T(1,7,7 n+i)-N T(6,7,7 n+i)\nonumber \\
	&+N T(2,7,7 n+i)-N T(5,7,7 n+i)\nonumber \\
	&-N T(3,7,7 n+i)+N T(4,7,7 n+i) \nonumber\\
	&\equiv 0 \pmod{7}.\no
\end{align}
Motivated by these congruences, Chern \cite{Chern2024} obtained
\begin{align} 
	&M_{\omega}(1, 5, 5n+4) - M_{\omega}(4,5,5n+4) \nonumber\\&+ 2M_{\omega}(2,5,5n+4) - 2M_{\omega}(3,5,5n+4) \nonumber\\&\equiv 0 \pmod{5},\no
\end{align}
where \begin{align}
	M_{\omega}(m,k,n):=\sum_{t\equiv m\pmod{k}}M_{\omega}(t,n)\no
\end{align} and $M_{\omega}(m,n)$ denotes 
the total number of ones in the partitions of $n$ with crank $m$. For more Andrews-Beck type congruences, see \cite{CMO,ChernII,ChernIII,hong,maoadv}. For studies on relations between Andrews-Beck partition statistics,
the reader is referred to \cite{xia,maorj, maoadv,maojc,maoxia,maoxia2}. 
In this paper, we obtain the following asymptotic formulas for $\NT(m,n)$ and $\MW(m,n)$.

\begin{theorem}\label{thm:MW}
Recall $\beta_n$ defined in \eqref{eq:Dyson-profile}.	For every fixed $A>0$, uniformly for integers $|m|\le A\sqrt n$,
	\begin{equation}\label{eq:MW-beta}
	\MW(m,n)=\frac{p(n)}4\log(1+e^{-\beta_nm})
		\sech^2\!\left(\frac{\beta_nm}2\right)
		+O_A\!\bigl(p(n)n^{-1/4}\sqrt{\log n}\bigr),
	\end{equation}
and
\begin{equation}\label{eq:NT-main}
\NT(m,n)=\frac{p(n)\log n}{8}
\sech^2\!\left(\frac{\pi m}{2\sqrt{6n}}\right)
+O_A\!\bigl(p(n)\log\log n\bigr).	\qquad	\qquad	\quad
\end{equation}
\end{theorem}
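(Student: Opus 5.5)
The plan is to reduce both statistics to sums of the ordinary counts $N(m',n')$ and $M(m',n')$ over shifted arguments, and then to insert Theorem~\ref{thlz} (with $k=2$ and $k=1$) term by term. Throughout I will use the standard ratio estimate $p(n-t)/p(n)=e^{-\beta_n t}\bigl(1+O((t+t^2/n)\,n^{-1/2})\bigr)$ for $t\le n^{3/4}$, together with crude tail bounds for larger $t$.

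\emph{The statistic $\NT(m,n)$.} I write $\ell(\lambda)=\sum_{k\ge1}m_k(\lambda)$, where $m_k(\lambda)$ is the multiplicity of the part $k$, and $m_k(\lambda)=\sum_{j\ge1}[m_k(\lambda)\ge j]$.

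1. Deleting $j$ copies of a part $k$ from $\lambda$ gives a partition of $n-jk$. If the largest part is unaffected, this raises the rank by $j$. Hence
\[
\NT(m,n)=\sum_{j\ge1}\sum_{k\ge1}N(m-j,\,n-jk)+E(m,n),
\]
where $E$ is a boundary term. It collects the cases where $k$ is, or exceeds, the largest part of the reduced partition. The sign of the shift $m\mp j$ has to be fixed carefully, but it does not affect the main term because $\sech^2$ is even.

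2. Conjugation sends rank $m$ to rank $-m$ and exchanges $\ell$ with $\lambda_1$. This gives $\NT(-m,n)=\NT(m,n)+mN(m,n)$. I will use this identity to symmetrize and to control $E$. The boundary configurations force the part $k$ to be comparable to the largest part. Their total contribution should be $O(p(n))$ times a sum like $\sum_{j\le\beta_n^{-1}}1/j$ restricted to a dyadic window, which gives $O_A(p(n)\log L_n)$. Pinning this down is the first delicate point.

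3. For the main sum I apply Theorem~\ref{thlz} with $k=2$ when $jk\le n^{3/4-\epsilon}$, and bound the remaining tail trivially. The sum becomes
\[
\frac{\beta_n p(n)}4\sum_{j}\sech^2\!\Bigl(\frac{\beta_n(m-j)}2\Bigr)\sum_k e^{-\beta_n jk}.
\]
The inner sum is $\frac1{\beta_n j}+O(1)$. For $j\le\beta_n^{-1}/L_n$ the shift in $\sech^2$ costs only $O(\beta_n j)$, so the harmonic sum yields $\frac{p(n)L_n}{4}\sech^2(\beta_n m/2)$. The range $\beta_n^{-1}/L_n<j\le n^{1/2+\epsilon}$ contributes $O(p(n)\log L_n)$, since $\sum 1/j$ over that range is $O(\log L_n)$ and the weights are bounded. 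Finally $L_n=\tfrac12\log n+O(1)$, which gives the form \eqref{eq:NT-main}.

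\emph{The statistic $\MW(m,n)$.} I will proceed in the same way using $\omega(\lambda)=\sum_{j\ge1}[\omega(\lambda)\ge j]$. Marking the ones in the crank generating function $\prod_n(1-q^n)/((zq;q)_\infty(z^{-1}q;q)_\infty)$, the factor carrying the ones is $1/(1-z^{-1}q)$. Removing this factor and reinserting $j$ ones should express $\MW(m,n)$ as a signed combination $\sum_{j\ge1}c_j\,M(m+s_j,n-j)$, where the coefficients are essentially $(-1)^{j+1}/j$-type after resumming the crank correction, since $\mu(\lambda)$ also depends on $\omega$.

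Inserting Theorem~\ref{thlz} with $k=1$ and $p(n-j)\approx e^{-\beta_n j}p(n)$, the sum becomes a Riemann-type series in $x=e^{-\beta_n m}$. I expect it to collapse to $\log(1+e^{-\beta_n m})\cdot\frac{p(n)}{4}\sech^2(\beta_n m/2)$. To get the error I truncate at $j\approx\sqrt{L_n/\beta_n}$ and balance the Taylor error $O(\beta_n j)$ against the tail. This choice is what produces $O_A(p(n)\sqrt{\beta_nL_n})=O_A(p(n)n^{-1/4}\sqrt{\log n})$.

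\emph{Main obstacle.} I expect the hardest step to be deriving the exact identity for $\MW$. The crank's definition mixes $\omega$ and $\mu$, and the resulting coefficients must telescope to the logarithm. I would first try to extract it from Chern's two-variable generating function for $\MW$ and verify the identity numerically for small $n$ before doing the asymptotics.
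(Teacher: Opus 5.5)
Your \MW\ half has a genuine gap. The identity everything rests on is never derived, and the form you predict for it is wrong. The ones are not carried by the factor $1/(1-z^{-1}q)$. Split by the number $j\ge1$ of ones as in Lemma~\ref{lem:ones-GF}, summed over $m$, and mark the ones by $y$. The marked series is $\frac{1-q}{(zq;q)_\infty}\sum_{j\ge1}\frac{(zq;q)_j}{(q;q)_j}(yz^{-1}q)^j=\frac{1-q}{(zq;q)_\infty}\bigl(\frac{(yq^2;q)_\infty}{(yz^{-1}q;q)_\infty}-1\bigr)$, so $y$ enters every factor of both infinite products. Differentiating at $y=1$ gives a logarithmic derivative, i.e.\ Lambert series rather than $(-1)^{j+1}/j$ coefficients:
\[
\sum_{n,m}\MW(m,n)z^mq^n=\frac{(q;q)_\infty}{(zq;q)_\infty(z^{-1}q;q)_\infty}\Bigl(\sum_{r\ge1}\frac{z^{-1}q^r}{1-z^{-1}q^r}-\sum_{r\ge2}\frac{q^r}{1-q^r}\Bigr).
\]
Thus $\MW(m,n)=\sum_{r,s\ge1}M(m+s,n-rs)-\sum_{r\ge2,\,s\ge1}M(m,n-rs)$, with $M$ read as the coefficients of the crank product; these differ from $M(m,n)$ only at $n=1$. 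Each double sum is $\frac{p(n)L_n}{4}\sech^2(\beta_nm/2)+O(p(n))$, so the leading terms cancel. The factor $\log(1+e^{-\beta_nm})$ survives only in the difference of the $O(p(n))$ constant terms. Those constants must be computed exactly: Euler's constant appears in both and cancels, and the remainder must be evaluated as an integral. So every approximation must be accurate to $o(p(n))$ against terms of size $p(n)L_n$. Your step of truncating at $j\approx\sqrt{L_n/\beta_n}$ and balancing Taylor error against a tail has no counterpart in this structure; the error term is read off from the theorem, not derived.

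The paper avoids the cancellation. It decomposes by the number $j$ of ones and uses the explicit $F_{m,j}$ and the monotone ratios of Lemma~\ref{lem:mode} to show $b_{m,j}(n)$ concentrates within $8\sqrt{L_n/\beta_n}$ of $H_{m,n}=\log(1+e^{-\beta_nm})/\beta_n$. It then applies Liu--Zhou once, to $M(m,n)$. That window width times $M(m,n)$ is exactly the $p(n)n^{-1/4}\sqrt{\log n}$ error.

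Your \NT\ half is a genuinely different and workable route; the paper instead decomposes by the number of parts $k$ and shows concentration at $K_n=L_n/\beta_n$. Three points need repair.

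\emph{Boundary term.} It is trivial rather than delicate. For each $\lambda$, exactly one pair $(k,j)=(\lambda_1,m_{\lambda_1}(\lambda))$ changes the largest part, so
\[
\NT(m,n)=N(m,n)+\sum_{j,k\ge1}N(m+j,n-jk)-E'.
\]
Here $E'$ counts triples $(\mu,k,j)$ with $\mu\vdash n-jk$, $\mu_1<k$ and rank $m+j$. The map $(\mu,k,j)\mapsto\mu\cup(k^j)$ is injective into partitions of $n$, so $0\le E'\le p(n)$. Note the shift is $m+j$.

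\emph{Ratio estimate.} Hardy--Ramanujan gives $p(n-t)/p(n)=e^{-\beta_nt}\bigl(1+O(n^{-1/2}+t^2n^{-3/2})\bigr)$. A relative error $O(tn^{-1/2})$, as you wrote it, is of order $1$ exactly where $jk\asymp\beta_n^{-1}$ and would swamp the main term.

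\emph{Range of $j$.} The inner sum is exactly $1/(e^{\beta_nj}-1)$, and you must use its exponential decay (or that of $\sech^2$) for $j\gg\beta_n^{-1}$. With bounded weights and $\sum 1/j$ up to $n^{1/2+\epsilon}$ you get $\epsilon\log n$, not $\log L_n$.

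With these repairs your route gives $\NT(m,n)=\frac{p(n)L_n}{4}\sech^2(\beta_nm/2)+O_A(p(n))$. This is sharper than the paper's $O_A(p(n)\log L_n)$, because you never pay for a concentration window.
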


Applying Theorem \ref{thm:MW} yields the following corollary.
\begin{corollary}\label{cor:small-m}
If $m=m(n)$ is an integer sequence with $m=o(\sqrt n)$, then
\begin{align}
	\MW(m,n)&=\frac{\log2}{4}p(n)
	+O\!\left(p(n)\left[n^{-1/4}\sqrt{\log n}+\frac{|m|}{\sqrt n}\right]\right),\label{eq:MW-small}
\\
\NT(m,n)&=\frac18p(n)\log n
+O\!\left(p(n)\left[\log\log n+\frac{m^2}{n}\log n\right]\right)\label{eq:NT-small}.
\end{align}
In particular,
\begin{equation}\label{eq:small-equivalents}
\MW(m,n)\sim\frac{\log2}{4}p(n),\qquad \NT(m,n)\sim\frac18p(n)\log n
.
\end{equation}
Moreover, for every fixed $B>0$, \begin{equation}\label{eq:MW-preserved}
	\MW(m,n)=\frac{\log2}{4}p(n)
	+O_B\!\bigl(p(n)n^{-1/4}\sqrt{\log n}\bigr)
\end{equation}
uniformly for $|m|\le Bn^{1/4}\sqrt{\log n}$,
 and
\begin{equation}\label{eq:NT-preserved}
	\NT(m,n)=\frac18p(n)\log n+O_B\!\bigl(p(n)\log\log n\bigr)
\end{equation}
uniformly for $|m|\le B\sqrt{n\log\log n/\log n}$.
\end{corollary}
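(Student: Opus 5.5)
The corollary follows from Theorem~\ref{thm:MW} by Taylor-expanding the main terms in the variable $x:=\beta_n m$ near $x=0$. The hypotheses $m=o(\sqrt n)$, $|m|\le Bn^{1/4}\sqrt{\log n}$ and $|m|\le B\sqrt{n\log\log n/\log n}$ all imply $|m|\le A\sqrt n$ for $n$ large with $A=1$, so Theorem~\ref{thm:MW} applies with an absolute constant $A$. Moreover $|x|=\pi|m|/\sqrt{6n}$ is bounded, indeed $x\to0$, in every case.

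For $\MW$, set $f(x)=\log(1+e^{-x})\sech^2(x/2)$. Then $f(0)=\log 2$, and $f$ is smooth with bounded derivative on $[-\pi/\sqrt6,\pi/\sqrt6]$. Hence $f(x)=\log 2+O(|x|)=\log2+O(|m|/\sqrt n)$. Inserting this into \eqref{eq:MW-beta} gives \eqref{eq:MW-small}. Since $f'(0)=-1/2\neq0$, only a linear error is available, so for \eqref{eq:MW-preserved} we simply note that $|m|/\sqrt n\le Bn^{-1/4}\sqrt{\log n}$ when $|m|\le Bn^{1/4}\sqrt{\log n}$. The two error terms then merge into $O_B(p(n)n^{-1/4}\sqrt{\log n})$.

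For $\NT$, use $\sech^2(y)=1+O(y^2)$ for bounded $y$, which holds because $\sech^2$ is even and smooth. With $y=\pi m/(2\sqrt{6n})$ this gives
\[
\frac{p(n)\log n}{8}\sech^2\!\left(\frac{\pi m}{2\sqrt{6n}}\right)=\frac{p(n)\log n}{8}+O\!\left(p(n)\frac{m^2}{n}\log n\right).
\]
Combining this with \eqref{eq:NT-main} yields \eqref{eq:NT-small}. When $|m|\le B\sqrt{n\log\log n/\log n}$ we have $\frac{m^2}{n}\log n\le B^2\log\log n$, which gives \eqref{eq:NT-preserved}.

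Finally, \eqref{eq:small-equivalents} follows because when $m=o(\sqrt n)$ each relative error tends to zero. For $\MW$ the relative error is $n^{-1/4}\sqrt{\log n}+|m|/\sqrt n\to0$. For $\NT$ it is $\log\log n/\log n+m^2/n\to0$.

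There is no serious obstacle. The only point needing care is that the implied constants in the Taylor bounds are uniform, which holds because $x$ ranges over a fixed compact interval on which $f'$ and $(\sech^2)''$ are bounded.
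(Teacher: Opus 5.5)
Your proposal is correct and follows essentially the same argument as the paper: apply Theorem~\ref{thm:MW} with $A=1$ (valid for large $n$), Taylor-expand $\sech^2$ and $\log(1+e^{-x})\sech^2(x/2)$ at $x=\beta_n m=0$, and then substitute the stated bounds on $|m|$ into the error terms. Your remark that the Taylor constants are uniform on the compact interval $|x|\le\pi/\sqrt6$ is a slightly more careful version of the paper's ``as $x\to0$'' expansion, but the proof is the same.
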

\begin{proof}
As $x\to0$,
\begin{equation}\no
\sech^2(x/2)=1+O(x^2),\qquad
\frac14\log(1+e^{-x})\sech^2(x/2)
=\frac{\log2}{4}-\frac{x}{8}+O(x^2).
\end{equation}
For all sufficiently large $n$, the hypothesis gives $|m|\le\sqrt n$. We may therefore apply both theorems with $A=1$ and substitute $\beta_nm=\pi m/\sqrt{6n}$ into these expansions. This proves \eqref{eq:MW-small} and \eqref{eq:NT-small}. Dividing by the respective leading terms gives \eqref{eq:small-equivalents}. Substitution of the two stated bounds on $|m|$ into the error terms proves \eqref{eq:MW-preserved}  and \eqref{eq:NT-preserved}.
\end{proof}

The paper is organized as follows.  Section~2 collects the partition-function and generalized-rank estimates.  Section~3 proves the theorem for the total number of ones at a prescribed crank, and Section~4 proves the theorem for the total number of parts at a prescribed rank.
\section{Some lemmas}

For an integer $s\ge0$, write
\begin{align*}
(a;q)_0:&=1,\\ (a;q)_s:&=\prod_{r=0}^{s-1}(1-aq^r),
\\
P(q):&=\prod_{r\ge1}(1-q^r)^{-1}=\sum_{n\ge0}p(n)q^n.
\end{align*}
The infinite product and power series converge for $|q|<1$.

We need the Hardy--Ramanujan asymptotic formula \cite{HR} in the form
\begin{equation}\label{eq:HR}
p(n)=\frac{1}{4\sqrt3\,n}
\exp\!\left(\pi\sqrt{\frac{2n}{3}}\right)
\left(1+O(n^{-1/2})\right).
\end{equation}

\Needspace{10\baselineskip}
\begin{lemma}\label{lem:partition}
 There are constants $C, c>0$ such that, for all sufficiently large integers $n$,
\begin{align}
p(n-h)&\le Cp(n)e^{-\beta_nh}
&&\textrm{for} \quad 0\le h\le n/2,\label{eq:ratio-near}\\
p(n-h)&\le Cp(n)e^{-c\sqrt n}
&&\textrm{for} \quad  n/2<h\le n,\label{eq:ratio-far}\\
e^{\beta_nn}P(e^{-\beta_n})&\le Cnp(n).\label{eq:P-bound}
\end{align}
\end{lemma}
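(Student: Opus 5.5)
All three bounds will come from the Hardy--Ramanujan formula \eqref{eq:HR}, written as $p(n)\asymp n^{-1}e^{2\sqrt{\zeta n}}$ with $\zeta=\pi^2/6$, so that $\beta_n=\sqrt{\zeta/n}$. We also use its one-sided consequence $p(k)\le C_0(k+1)^{-1}e^{2\sqrt{\zeta k}}$, valid for all $k\ge0$.

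\textbf{Bound \eqref{eq:ratio-near}.} Take $0\le h\le n/2$. Then $n-h\ge n/2$ is large, so \eqref{eq:HR} applies to both $p(n-h)$ and $p(n)$. The prefactor ratio is $n/(n-h)\le 2$. For the exponentials, concavity of $x\mapsto\sqrt x$ gives $\sqrt{n-h}\le\sqrt n-\frac{h}{2\sqrt n}$. Hence
\[
2\sqrt{\zeta(n-h)}-2\sqrt{\zeta n}\le -\frac{\sqrt\zeta\,h}{\sqrt n}=-\beta_nh .
\]
Together with the factor $(1+O(n^{-1/2}))$ this yields $p(n-h)\le Cp(n)e^{-\beta_nh}$.

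\textbf{Bound \eqref{eq:ratio-far}.} Take $n/2<h\le n$, so $k=n-h<n/2$. By monotonicity of $p$,
\[
p(k)\le p(\lfloor n/2\rfloor)\ll e^{2\sqrt{\zeta n/2}}.
\]
Meanwhile $p(n)\gg n^{-1}e^{2\sqrt{\zeta n}}$. The ratio is therefore at most
\[
n\,e^{-2\sqrt\zeta(1-1/\sqrt2)\sqrt n}\le e^{-c\sqrt n}
\]
for any $0<c<2\sqrt\zeta(1-1/\sqrt2)$ and large $n$.

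\textbf{Bound \eqref{eq:P-bound}.} Write
\[
e^{\beta_nn}P(e^{-\beta_n})=\sum_{k\ge0}p(k)e^{\beta_n(n-k)},
\]
and split the sum into three ranges.
\begin{itemize}
\item For $n/2\le k\le n$, put $h=n-k$. By \eqref{eq:ratio-near} each term is at most $Cp(n)$, and there are about $n/2$ terms, giving $O(np(n))$.
\item For $k<n/2$, use monotonicity: each term is at most $p(n/2)e^{\beta_nn}$. Now $e^{\beta_nn}=e^{\sqrt{\zeta n}}$, and $2\sqrt{\zeta/2}+\sqrt\zeta\approx 2.414\sqrt\zeta>2\sqrt\zeta$, so monotonicity alone does not suffice here.
\item For $k>n$, a bound is also needed.
\end{itemize}

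The ranges $k<n/2$ and $k>n$ are handled uniformly through the exponent
\[
f(k)=2\sqrt{\zeta k}-\beta_nk=2\sqrt{\zeta k}-\sqrt{\zeta/n}\,k .
\]
This is concave in $k$, maximal at $k=n$ with $f(n)=\sqrt{\zeta n}$, and $f(n)+\beta_nn=2\sqrt{\zeta n}$. So each term is at most
\[
C_0(k+1)^{-1}\exp\bigl(f(k)+\beta_nn\bigr)\le C_0(k+1)^{-1}e^{2\sqrt{\zeta n}},
\]
which is $\ll (n/(k+1))\,p(n)$. This handles $k\le n$ with total $\ll p(n)\,n\log n$, which is slightly too big.

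To remove the $\log n$, use the quadratic decay of the exponent. Expanding $f(n+t)=f(n)-\frac{\sqrt\zeta\,t^2}{4n^{3/2}}+\dots$, the terms with $|k-n|\ge n^{3/4}\sqrt{\log n}$ carry a factor like $e^{-c\log^2 n}$ and sum negligibly. (In particular, the range $k<n/2$ has $f(k)\le f(n)-c\sqrt n$.) For $k>2n$, one has $f(k)\le f(n)-c\sqrt{k}$, which sums to $o(p(n))$.

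This leaves the window $|k-n|\le n^{3/4}$, where $p(k)e^{-\beta_nk}\ll n^{-1}e^{f(n)}$. Summing the Gaussian factor $e^{-ct^2/n^{3/2}}$ over $t$ gives $\asymp n^{3/4}$. The contribution is therefore $O(n^{3/4}p(n))\le Cnp(n)$.

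The main care point is exactly this last bound: bounding the tail $k>n$ and $k<n/2$ via the concavity of $f$ rather than via crude monotonicity.
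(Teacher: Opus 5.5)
Your argument for \eqref{eq:ratio-near} is the paper's argument. For \eqref{eq:ratio-far} you compare $p(\lfloor n/2\rfloor)$ with $p(n)$ directly through \eqref{eq:HR}, while the paper uses monotonicity and then \eqref{eq:ratio-near} with $h=\lfloor n/2\rfloor$; the two are equivalent.

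The real difference is in \eqref{eq:P-bound}. The paper never expands $P$ term by term. It writes $\log P(e^{-\beta_n})=\sum_{s\ge1}\frac{1}{s(e^{\beta_n s}-1)}$ and uses $e^{x}-1\ge x$ to get $\log P(e^{-\beta_n})\le \pi^2/(6\beta_n)=\beta_n n$. Hence $e^{\beta_n n}P(e^{-\beta_n})\le e^{2\beta_n n}=e^{\pi\sqrt{2n/3}}\le Cnp(n)$. That takes three lines and needs \eqref{eq:HR} only at $n$.

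Your route is correct. You sum $p(k)e^{\beta_n(n-k)}$, using a uniform Hardy--Ramanujan upper bound for every $k$ together with the concavity of $f(k)=2\sqrt{\zeta k}-\beta_n k$. It can even give the sharper bound $O(n^{3/4}p(n))$, which is the true order $\asymp\sqrt{\beta_n}\,e^{2\sqrt{\zeta n}}$. But the lemma only needs $O(np(n))$, so that extra precision costs a lot of work for nothing here.

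Some details of your final step are misstated, though none of this creates a real gap:
\begin{itemize}
\item For $|k-n|\ge n^{3/4}\sqrt{\log n}$ the Gaussian factor is $e^{-c\log n}=n^{-c}$, not $e^{-c\log^2 n}$.
\item After cutting at $n^{3/4}\sqrt{\log n}$, you then treat only the window $|k-n|\le n^{3/4}$, which leaves the range in between unaddressed.
\end{itemize}
The window is unnecessary anyway. In your crude bound, the extra factor $\log n$ comes entirely from $k<n/2$: the harmonic sum over $[n/2,n]$ contributes only $\log 2$. For $k<n/2$ you already have the saving $f(k)\le f(n)-c\sqrt n$.

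On $[n/2,2n]$, each term is at most $C_0(k+1)^{-1}e^{2\sqrt{\zeta n}}\ll p(n)$, and there are $O(n)$ terms. This gives $O(np(n))$ at once.

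If you want the $n^{3/4}$ bound, note that $f''(k)\le-\tfrac{\sqrt\zeta}{2}(2n)^{-3/2}$ on $[n/2,2n]$. With $f'(n)=0$ this gives the uniform bound $f(n+t)\le f(n)-c\,t^2n^{-3/2}$ on that whole range. Summing this Gaussian over all $t$ then gives $O(n^{3/4}p(n))$, with no separate window.
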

\begin{proof}
Put $b_0=\pi\sqrt{2/3}$, so that $\beta_n=b_0/(2\sqrt n)$. For $0\le h\le n/2$, formula \eqref{eq:HR} gives
\[
\frac{p(n-h)}{p(n)}
\le C\frac{n}{n-h}
\exp\!\bigl(b_0(\sqrt{n-h}-\sqrt n)\bigr)
\]
for some constant $C.$
Since $n/(n-h)\le2$ and
\[
\sqrt{n-h}-\sqrt n
=-\frac{h}{\sqrt{n-h}+\sqrt n}
\le-\frac{h}{2\sqrt n},
\]
we obtain \eqref{eq:ratio-near}. 

Note that the partition function is nondecreasing. Thus, for $h>n/2$, we have
\begin{align*}
p(n-h)
&\le p(\lfloor n/2\rfloor)
\le p(\lceil n/2\rceil)
=p\bigl(n-\lfloor n/2\rfloor\bigr)
\le C p(n)e^{-\beta_n\lfloor n/2\rfloor}.\qquad\qquad\textrm{(by \eqref{eq:ratio-near})}\no
\end{align*} 
This proves \eqref{eq:ratio-far}.

Since
\begin{align*}
&\log P\left(e^{-\beta_n}\right)
&=\log \prod_{r=1}^\infty \frac{1}{1-e^{-r\beta_n}}
&=\sum_{r\ge1}\sum_{s\ge1}\frac{e^{-\beta_n rs}}s
&=\sum_{s\ge1}\frac1{s\left(e^{\beta_n s}-1\right)}
&\le\frac1\beta_n\sum_{s\ge1}\frac1{s^2}
=\frac{\pi^2}{6\beta_n},
\end{align*}
 we find that
\[
e^{\beta_nn}P\left(e^{-\beta_n}\right)
\le e^{\beta_nn}e^{\frac{\pi^2}{6\beta_n}}=e^{2\beta_nn}\le Cnp(n),
\]
where the last inequality again follows from \eqref{eq:HR}. This proves \eqref{eq:P-bound}.
\end{proof}

\begin{lemma}\label{lem:counts}
	For every fixed $A>0$, uniformly for integers $|m|\le A\sqrt n$,
	\begin{align}
		N(m,n)&=\frac{\beta_n}{4}p(n)
		\sech^2\!\left(\frac{\beta_nm}{2}\right)
		\left(1+O_A(n^{-1/2})\right),\label{eq:rank-count}\\
		M(m,n)&=\frac{\beta_n}{4}p(n)
		\sech^2\!\left(\frac{\beta_nm}{2}\right)
		\left(1+O_A(n^{-1/2})\right).\label{eq:crank-count}
	\end{align}
	In particular, $N(m,n),M(m,n)\ll_A\beta_np(n)$.
\end{lemma}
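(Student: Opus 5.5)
The plan is to derive Lemma \ref{lem:counts} directly from Theorem \ref{thlz}, using the identifications $N_2(m,n)=N(m,n)$ and $N_1(m,n)=M(m,n)$ noted after that theorem. Both \eqref{eq:rank-count} and \eqref{eq:crank-count} are then instances of \eqref{eq:LZ} with $k=2$ and $k=1$. Since $k$ takes only these two fixed values, the implied constants $O_k$ become absolute constants.

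\textbf{Step 1: the hypothesis of Theorem \ref{thlz} holds uniformly.} Fix $A>0$ and assume $|m|\le A\sqrt n$. Then $|m|/n^{3/4}\le A n^{-1/4}\to0$, so $|m|=o(n^{3/4})$ holds uniformly in this range. One point needs a remark. Theorem \ref{thlz} is stated for sequences with $|m|=o(n^{3/4})$, and we want a uniform $O$-statement. I would read \eqref{eq:LZ} in its uniform form: for every $\varepsilon$-range $|m|\le \varepsilon n^{3/4}$ with $\varepsilon$ small, the error is bounded by a constant times $(n+m^2)/n^{3/2}$. Alternatively, I would argue by contradiction: if the bound failed uniformly, one could choose a sequence $m_j$ with $|m_j|\le A\sqrt{n_j}$ violating it, and that sequence still satisfies $m_j=o(n_j^{3/4})$. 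This contradicts Theorem \ref{thlz}. The only genuinely delicate point is this passage from the $o(n^{3/4})$ formulation to a uniform bound on the range $|m|\le A\sqrt n$, and the contradiction argument settles it.

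\textbf{Step 2: simplify the error term.} For $|m|\le A\sqrt n$ we have $m^2\le A^2 n$. Hence
\[
\frac{n+m^2}{n^{3/2}}\le \frac{(1+A^2)n}{n^{3/2}}=(1+A^2)n^{-1/2},
\]
so the relative error in \eqref{eq:LZ} is $O_A(n^{-1/2})$. This gives \eqref{eq:rank-count} and \eqref{eq:crank-count}.

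\textbf{Step 3: the final claim.} Since $0<\sech^2(x)\le 1$ for real $x$, and $1+O_A(n^{-1/2})\ll_A 1$ for large $n$, we get $N(m,n),M(m,n)\ll_A \beta_n p(n)$ for large $n$. The finitely many small $n$ are absorbed into the constant: for such $n$ only finitely many pairs $(m,n)$ occur, and $\beta_n p(n)>0$.
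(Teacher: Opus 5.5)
Your proposal is correct and follows the paper's proof: you specialize Theorem~\ref{thlz} to $k=2$ and $k=1$, note that $|m|\le A\sqrt n$ lies in the $o(n^{3/4})$ range, and bound $(n+m^2)/n^{3/2}\le(1+A^2)n^{-1/2}$. Your extra remarks are sound refinements that the paper leaves implicit: the contradiction/subsequence argument turning the $o(n^{3/4})$ formulation into a uniform bound, and the absorption of finitely many small $n$ into the constant.
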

\begin{proof}
The range $|m|\le A\sqrt n$ is contained in $|m|=o(n^{3/4})$, and
	\[
	\frac{n+m^2}{n^{3/2}}\le(1+A^2)n^{-1/2}.
	\]
Taking $k=2$ and $k=1$ in \eqref{eq:LZ} proves \eqref{eq:rank-count} and \eqref{eq:crank-count}, respectively.
\end{proof}

\Needspace{11\baselineskip}

\section{Proof of \eqref{eq:MW-beta}}\label{pmw}

\Needspace{13\baselineskip}
Throughout, set
\begin{align}
L_n:=\log\frac1{\beta_n},\quad
K_n:=\frac{L_n}{\beta_n},\quad H_{m,n}:=\frac{\log \left(1+e^{-\beta_nm}\right)}{\beta_n}\label{defhn}
\end{align}
with $\beta_n$ given in \eqref{eq:Dyson-profile}.

For an integer $m$, put $j_0=\max\{1,-m\}$. Let $b_{m,j}(n)$ denote the number of partitions of $n$ with crank $m$ and exactly $j\ge1$ ones. This number is zero when $j<j_0$ by \eqref{eq:rank-crank}.
We have \begin{align}\label{eq:ones-sums}
	\MW(m,n)&=\sum_{j\ge j_0}j\,b_{m,j}(n)
	\intertext{and}
	M^+(m,n)&=\sum_{j\ge j_0}b_{m,j}(n),\label{mplus}
\end{align}
where  $M^+(m,n)$ denotes the number of partitions of $n$ with crank $m$ and at least one part equal to $1$.
Let $B_m(n):=M(m,n)-M^+(m,n)$. Note that a nonempty partition with no ones has crank equal to its largest part, which is at least $2$. Then, for $m\le1, n\ge1$, 
\begin{align}
	B_m(n)&=0\no
	\intertext{and, for $m\geq2$,}
	\label{eq:no-ones-GF}
	\sum_{n\ge1}B_m(n)q^n&=\frac{q^m}{(q^2;q)_{m-1}}.
\end{align}
Furthermore, we have the following lemma.
\begin{lemma}\label{lem:no-ones}
	For every fixed $A>0$, there exists $c_A>0$ such that, uniformly for integers $2\le m\le A\sqrt n$,
	\begin{equation}\label{eq:no-ones-bound}
		B_m(n)\ll_A n\beta_np(n)e^{-c_A/\beta_n}.
	\end{equation}
	Consequently, for every fixed $D>0$, uniformly for integers $|m|\le A\sqrt n$,
	\begin{equation}\label{eq:no-ones-negligible}
		H_{m,n}B_m(n)=o_A\!\bigl(p(n)\beta_n^D\bigr),
	\end{equation}
	where $H_{m,n}$ is defined in \eqref{defhn}.
\end{lemma}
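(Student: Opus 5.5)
The plan is a saddle-point (Rankin-type) bound on the generating function \eqref{eq:no-ones-GF}. Since $B_m(n)\ge0$, for any $0<x<1$ we have
\[
B_m(n)\le x^{-n}\frac{x^m}{(x^2;x)_{m-1}}.
\]
I take $x=e^{-\beta_n}$, the choice matching Lemma~\ref{lem:partition}, and write
\[
\frac{1}{(x^2;x)_{m-1}}=P(x)\prod_{r\ge m+1}(1-x^r)\prod_{r=1}(1-x)\le P(x)(1-x)\prod_{r\ge m+1}(1-x^r).
\]
Then \eqref{eq:P-bound} gives
\[
B_m(n)\le Cnp(n)(1-e^{-\beta_n})\,e^{-\beta_n m}\prod_{r>m}(1-e^{-\beta_n r}).
\]
The factor $1-e^{-\beta_n}\le\beta_n$ supplies the $\beta_n$ in \eqref{eq:no-ones-bound}. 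The factor $e^{-\beta_nm}$ is at most $1$.

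The key step is to show that the tail product is exponentially small in $1/\beta_n$. Because $m\le A\sqrt n$, we have $\beta_n m\le A\pi/\sqrt6=:a$. Then
\[
\log\prod_{r>m}(1-e^{-\beta_nr})\le-\sum_{r>m}e^{-\beta_n r}=-\frac{e^{-\beta_n(m+1)}}{1-e^{-\beta_n}}\le-\frac{e^{-a-\beta_n}}{\beta_n}.
\]
This yields \eqref{eq:no-ones-bound} with any $c_A<e^{-a}$ for $n$ large, for instance $c_A=e^{-a}/2$. The same bound extends to $m\le 1$ trivially, since $B_m(n)=0$ there.

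For \eqref{eq:no-ones-negligible}, only $2\le m\le A\sqrt n$ matters. There $H_{m,n}\le \log 2/\beta_n$, because $m>0$. Hence
\[
H_{m,n}B_m(n)\ll_A n\,p(n)e^{-c_A/\beta_n}.
\]
Since $n\asymp\beta_n^{-2}$ and $e^{-c_A/\beta_n}$ decays faster than any power of $\beta_n$, this is $o(p(n)\beta_n^D)$ for every fixed $D$.

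The only potentially delicate point is the lower bound $e^{-\beta_n m}\ge e^{-a}$ inside the tail sum, which is exactly where the hypothesis $m\le A\sqrt n$ enters. Beyond that I expect no real obstacle: the saddle choice $x=e^{-\beta_n}$ already makes $x^{-n}P(x)$ comparable to $np(n)$ via \eqref{eq:P-bound}, so no finer circle-method input is needed.
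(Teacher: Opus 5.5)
Your proposal is correct and follows essentially the same route as the paper: bound $B_m(n)$ by evaluating the generating function at $\rho=e^{-\beta_n}$, factor out $P(\rho)(1-\rho)$, apply \eqref{eq:P-bound}, and bound the tail product by $\exp\bigl(-\rho^{m+1}/(1-\rho)\bigr)$ using $\beta_n m\le \pi A/\sqrt6$ and $1-\rho\le\beta_n$. The differences are only cosmetic: you take $c_A=e^{-a}/2$ where the paper takes $e^{-a_A-1}$, and you use the sharper bound $H_{m,n}\le\log 2/\beta_n$ for $m\ge 2$ where the paper uses $\log(1+e^{a_A})/\beta_n$ (also, your displayed ``$\le$'' in the product factorization is in fact an equality).
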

\begin{proof}
	%
	Write $\beta=\beta_n$ and $\rho=e^{-\beta}$. Nonnegativity of the coefficients and \eqref{eq:P-bound} give
	\begin{align*}
		B_m(n)
		&\le\rho^{-n}\frac{\rho^m}{\prod_{s=2}^{m}(1-\rho^s)}\\
		&=\rho^{-n}P(\rho)(1-\rho)\rho^m
		\prod_{s\ge m+1}(1-\rho^s)\\
		&\le Cnp(n)(1-\rho)\rho^m
		\exp\!\left(-\frac{\rho^{m+1}}{1-\rho}\right).
	\end{align*}
	When $m\le A\sqrt n$, we have $\beta m\le a_A:=\pi A/\sqrt6$. For $\beta\le1$,
	\[
	\rho^{m+1}\ge e^{-a_A-1},\qquad 1-\rho\le\beta.
	\]
	Thus \eqref{eq:no-ones-bound} holds with $c_A=e^{-a_A-1}$. Moreover, for $|m|\le A\sqrt n$,
	\[
	0<H_{m,n}\le\frac{\log(1+e^{a_A})}{\beta_n}.
	\]
	It follows that $H_{m,n}B_m(n)\ll_A np(n)e^{-c_A/\beta_n}$. Since $n=\pi^2/(6\beta_n^2)$ and the exponential decay dominates every fixed power, we obtain \eqref{eq:no-ones-negligible}. For $m\le1$, the assertion follows directly from $B_m(n)=0$.
\end{proof}
For a power series \( f(q) \), let \( [q^m] f(q) \) denote the coefficient of \( q^m \) in its expansion,
and we say $f_1(q)\preceq f_2(q)$ when \( [q^m] f_1(q)\le  [q^m] f_2(q) \) for all $m.$
Recall $b_{m,j}(n)$ defined at the beginning of Section \ref{pmw}. Let
\begin{align}
	F_{m,j}(q)&:=\sum_{n\ge0}b_{m,j}(n)q^n.\label{defdaf}
\end{align}
Obviously, we have $0\preceq F_{m,j}\preceq P(q)$.
\begin{lemma}\label{lem:ones-GF}
	For $j\ge j_0$, we have
	\begin{align}
		F_{m,j}(q)
		&=\frac{(1-q)q^{j^2+(m+2)j+m}}{(q;q)_j(q;q)_{j+m}}.\label{eq:ones-GF}
	\end{align}
\end{lemma}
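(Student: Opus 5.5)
We prove \eqref{eq:ones-GF} by a direct combinatorial decomposition of the partitions counted by $b_{m,j}(n)$. Fix $j\ge j_0\ge1$. A partition $\lambda$ with exactly $j$ ones has $\omega(\lambda)=j>0$, so by \eqref{eq:rank-crank} its crank is $\mu(\lambda)-j$. Hence $\crk(\lambda)=m$ exactly when $\mu(\lambda)=m+j$, i.e.\ exactly $m+j$ parts exceed $j$. This requires $m+j\ge0$, which is guaranteed by $j\ge j_0\ge -m$. So $\lambda$ splits uniquely into three pieces:
\begin{itemize}
\item[(a)] $j$ ones;
\item[(b)] exactly $m+j$ parts that are $\ge j+1$;
\item[(c)] any number of parts in $\{2,\dots,j\}$.
\end{itemize}
Conversely, any such triple gives a partition with $\omega=j$ and $\mu=m+j$.

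Next we write down the generating function of each piece.
\begin{itemize}
\item[(a)] contributes $q^j$.
\item[(c)] contributes $1/\prod_{s=2}^{j}(1-q^s)=(1-q)/(q;q)_j$. When $j=1$ this is the empty product $1$, which agrees with $(1-q)/(q;q)_1=1$.
\item[(b)] By the standard count, partitions into exactly $k$ parts each $\ge j+1$ have generating function $q^{k(j+1)}/(q;q)_k$: subtract $j$ from every part to get a partition into exactly $k$ positive parts, which is generated by $q^k/(q;q)_k$. With $k=m+j$ this gives $q^{(m+j)(j+1)}/(q;q)_{j+m}$. The case $k=0$ gives $1$, which is consistent.
\end{itemize}

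Multiplying the three factors gives
\[
F_{m,j}(q)=\frac{(1-q)\,q^{j+(m+j)(j+1)}}{(q;q)_j(q;q)_{j+m}}.
\]
The exponent simplifies as $j+(m+j)(j+1)=j^2+(m+2)j+m$, which is exactly \eqref{eq:ones-GF}.

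The only delicate point is the bookkeeping at the boundary. One must check that parts equal to exactly $j$ are counted among those not exceeding $\omega$, which holds because $\mu$ counts $\lambda_i>\omega(\lambda)$ strictly. One must also check the degenerate cases $j=1$ and $j=-m$. Beyond these checks the argument is routine.
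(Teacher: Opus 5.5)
Your proposal is correct and follows the paper's proof: the same three-way decomposition into the $j$ ones, the free parts in $\{2,\dots,j\}$ contributing $(1-q)/(q;q)_j$, and the $j+m$ parts exceeding $j$. The only cosmetic difference is that you subtract $j$ from each large part to get exactly $j+m$ positive parts, while the paper subtracts $j+1$ to get at most $j+m$ parts; both give $q^{(j+1)(j+m)}/(q;q)_{j+m}$.
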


\begin{proof}
	Suppose that a partition has exactly $j\ge1$ ones. By \eqref{eq:rank-crank}, its crank is $m$ if and only if it has exactly $r=j+m\ge0$ parts greater than $j$. The $j$ ones contribute $q^j$. The parts $2,\ldots,j$ may occur with arbitrary multiplicities and contribute
	$\prod_{s=2}^{j}(1-q^s)^{-1}=(q^2;q)_{j-1}^{-1}$. Subtracting $j+1$ from each of the $r$ parts greater than $j$ leaves a partition with at most $r$ positive parts. Thus the contribution from these $r$ parts is
	\[
	\frac{q^{(j+1)r}}{(q;q)_r}.
	\]
	These three independent choices uniquely determine the original partition, proving \begin{align*}
		F_{m,j}(q)
		&=\frac{q^{j+(j+1)(j+m)}}{(q^2;q)_{j-1}(q;q)_{j+m}},
	\end{align*}
	which gives \eqref{eq:ones-GF}.
\end{proof}
\Needspace{12\baselineskip}

\begin{lemma}\label{lem:mode}
	Let $a>0$. For sufficiently small $\beta>0$ and an integer $m$ satisfying $|\beta m|\le a$, put
	\[
	\rho=e^{-\beta},\qquad j_0=\max\{1,-m\},\qquad
	H=\frac{\log(1+e^{-\beta m})}{\beta}.
	\]
	There exists an integer $J\ge j_0$ such that, uniformly in this range,
	\begin{equation}\label{eq:mode-location}
		J=H+O_a(1),
	\end{equation}
	and, for every integer $j\ge j_0$,
	\begin{equation}\label{eq:mode-bound}
		F_{m,j}(\rho)\le F_{m,J}(\rho)
		\exp\!\bigl(-\beta|j-J|(|j-J|-1)\bigr).
	\end{equation}
	The threshold for $\beta$ and the implied constant may depend only on $a$.
\end{lemma}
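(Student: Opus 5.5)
The plan is to study the ratio of consecutive terms of $j\mapsto F_{m,j}(\rho)$ using the closed form \eqref{eq:ones-GF}. Write $r=j+m$ and $f(j):=F_{m,j}(\rho)$. For $j\ge j_0$ with $j+1\ge j_0$, the exponent $j^2+(m+2)j+m$ increases by $2j+m+3$ when $j$ is replaced by $j+1$. Hence
\[
R(j):=\frac{f(j+1)}{f(j)}=\frac{\rho^{2j+m+3}}{(1-\rho^{j+1})(1-\rho^{j+m+1})}.
\]
The key observation is that $\log R(j)$ is strictly decreasing in $j$. The numerator contributes $-2\beta$ per unit step, and the two denominator factors, as functions of $j$, are increasing toward $1$, so $-\log(1-\rho^{j+1})$ and $-\log(1-\rho^{j+m+1})$ decrease. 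Thus
\[
\log R(j+1)-\log R(j)\le -2\beta,
\]
and $f$ is log-concave with second differences at most $-2\beta$.

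Define $J$ as the first $j\ge j_0$ with $R(j)\le 1$; such $j$ exists since $R(j)\to 0$. If no earlier index satisfies $R\ge1$, set $J=j_0$. By monotonicity of $\log R$, we have $\log R(J+t)\le -2\beta t$ for $t\ge0$ and $\log R(J-1-t)\ge 2\beta t$ for $t\ge 0$. Telescoping gives, for $j=J+k$ with $k\ge1$,
\[
\log f(j)-\log f(J)=\sum_{t=0}^{k-1}\log R(J+t)\le -2\beta\sum_{t=0}^{k-1}t=-\beta k(k-1).
\]
For $j=J-k$, the inequality
\[
\log f(J)-\log f(j)=\sum_{t=0}^{k-1}\log R(J-1-t)\ge \beta k(k-1)
\]
is identical in form. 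This proves \eqref{eq:mode-bound}. The case $J=j_0$ only has the $j\ge J$ side.

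For \eqref{eq:mode-location}, solve $R(x)=1$ asymptotically. Put $u=\rho^{j}$, so $\rho^{j+m}=ue^{-\beta m}$. Up to factors $\rho^{O(1)}=1+O(\beta)$, the equation reads
\[
u^2e^{-\beta m}=(1-u)(1-ue^{-\beta m}).
\]
Expanding gives $0=1-u(1+e^{-\beta m})$, so $u=1/(1+e^{-\beta m})$. Then $j=\beta^{-1}\log(1+e^{-\beta m})=H$. The rigorous version checks two facts:
\begin{itemize}
\item At $j=H+C$ one has $\log R\le0$.
\item At $j=H-C$ one has $\log R>0$.
\end{itemize}
Here $C=C_a$ is large. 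Near $u_0=1/(1+e^{-\beta m})$, the function $g(u)=\log\bigl(u^2e^{-\beta m}\bigr)-\log\bigl((1-u)(1-ue^{-\beta m})\bigr)$ has derivative bounded below by a constant depending on $a$, since $u_0\in[c_a,1-c_a]$ and $e^{\pm\beta m}$ is bounded. Shifting $j$ by $C$ changes $u$ by a factor $e^{\mp C\beta}$, that is, by about $C\beta u_0$. This moves $g$ by $\gg_a C\beta$, which beats the $O(\beta)$ perturbation coming from the shifts $\rho^{3},\rho^{1}$.

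Finally one must check $H-C\ge j_0$, or otherwise treat the boundary. Since $H\ge \beta^{-1}\log(1+e^{-a})\to\infty$ and $H\ge -m$ (as $\log(1+e^{-\beta m})\ge -\beta m$), the edge case is $m$ very negative. There the margin is $H+m=\beta^{-1}\log(1+e^{\beta m})\ge \beta^{-1}\log(1+e^{-a})$, which is also large. So $J$ lies in the interior and $|J-H|=O_a(1)$.

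The main obstacle I expect is making this location argument uniform in $m$. The factor $1-\rho^{j+m+1}$ could be small when $j+m$ is small, but the margin computation above shows $j+m\asymp_a\beta^{-1}$ near $H$. This keeps $u e^{-\beta m}$ bounded away from $1$, which gives the needed uniformity.
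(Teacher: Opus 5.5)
Your proposal is correct and follows essentially the paper's route: the same ratio $R_j=F_{m,j+1}(\rho)/F_{m,j}(\rho)$, the same bound $R_{j+1}/R_j\le e^{-2\beta}$, the same telescoping, and your $J$ (the first index $j\ge j_0$ with $R_j\le1$) coincides with the paper's $J=\lceil j_*\rceil$. The only difference is in locating $J$: the paper solves the quadratic $b(1-\rho)y^2-(1+b)y+1=0$ in $y=\rho^{j+1}$ exactly, whereas you check the sign of $\log R$ at $H\pm C$ perturbatively; both give $J=H+O_a(1)$ once $H-j_0\to\infty$ is verified, which you do.
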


\begin{proof}
	Write $A_j=F_{m,j}(\rho)>0$. By \eqref{eq:ones-GF},
	\begin{equation}\label{eq:ratio}
		R_j:=\frac{A_{j+1}}{A_j}
		=\frac{\rho^{2j+m+3}}
		{(1-\rho^{j+1})(1-\rho^{j+m+1})}.
	\end{equation}
	Hence
	\begin{equation}\label{eq:ratio-decrease}
		\frac{R_{j+1}}{R_j}
		=\rho^2\frac{(1-\rho^{j+1})(1-\rho^{j+m+1})}
		{(1-\rho^{j+2})(1-\rho^{j+m+2})}
		\le e^{-2\beta}.
	\end{equation}
	
	Regard $R_j$ as a function of the real variable $j>\max\{-1,-m-1\}$. Its logarithmic derivative is
	\[
	-2\beta-\frac{\beta\rho^{j+1}}{1-\rho^{j+1}}
	-\frac{\beta\rho^{j+m+1}}{1-\rho^{j+m+1}}<0,
	\]
	so $R_j$ is strictly decreasing. Set $b=\rho^m$ and $y=\rho^{j+1}$. The equation $R_j=1$ becomes
	\[
	b(1-\rho)y^2-(1+b)y+1=0.
	\]
	This quadratic polynomial is positive at $y=0$ and takes the values $-b\rho$ and $-\rho/b$ at $y=1$ and $y=b^{-1}$, respectively. Its smaller root therefore lies in $(0,\min\{1,b^{-1}\})$ and is given by
	\begin{equation}\label{eq:mode-root}
		y_* =\frac{(1+b)-\sqrt{(1-b)^2+4b\rho}}{2b(1-\rho)}=\frac{2}{1+b+\sqrt{(1-b)^2+4b\rho}}.
	\end{equation}
	Put
	\[
	\varepsilon:=\frac{4b(1-\rho)}{(1+b)^2},\qquad
	0<\varepsilon\le1-\rho\le\beta.
	\]
	Then
	\[
	y_* =\frac1{1+b}\cdot\frac2{1+\sqrt{1-\varepsilon}}
	=\frac1{1+b}\left(1+O(\varepsilon)\right)=\frac1{1+b}\left(1+O(\beta)\right).
	\]
	Thus \[
	\log y_*=-\log(1+b)+O(\beta).
	\]
	Let $j_*$ denote the corresponding real solution of $R_j=1$. Then 
	\begin{equation}\label{eq:real-mode}
		j_*:=-\frac{\log y_*}{\beta}-1
		=\frac{\log(1+b)}{\beta}+O(1)=H+O(1).
	\end{equation}
	
	We must also check its distance from $j_0$. If $m\ge0$, then $j_0=1$. If $m\le-1$, then $j_0=-m$ and
	\[
	H-j_0=H+m=\frac{\log(1+e^{\beta m})}{\beta}.
	\]
	Consequently, uniformly for $|\beta m|\le a$,
	\begin{equation}\label{eq:mode-endpoint}
		H-j_0\ge\frac{\log(1+e^{-a})}{\beta}-1\longrightarrow\infty
	\end{equation}
	as $\beta\to0^+$.
	Take $J=\lceil j_*\rceil$. Equations \eqref{eq:real-mode} and \eqref{eq:mode-endpoint} imply that, for sufficiently small $\beta$ depending only on $a$, we have $J-1\ge j_0$ and
	\begin{equation}\label{eq:mode-crossing}
		R_{J-1}\ge1\ge R_J
	\end{equation}
	since $R_j$ is decreasing.
	This proves \eqref{eq:mode-location} and shows that $A_J\geq A_j$ for all $j$.
	
	For an integer $t\ge0$, equations \eqref{eq:ratio-decrease} and \eqref{eq:mode-crossing} give
	\[
	\frac{A_{J+t}}{A_J}
	=\prod_{s=0}^{t-1}R_{J+s}
	\le\prod_{s=0}^{t-1}e^{-2\beta s}
	=e^{-\beta t(t-1)}.
	\]
	Similarly, when $J-t\ge j_0$,
	\[
	\frac{A_{J-t}}{A_J}
	=\prod_{s=1}^{t}R_{J-s}^{-1}
	\le\prod_{s=1}^{t}e^{-2\beta(s-1)}
	=e^{-\beta t(t-1)}.
	\]
	Combining these inequalities proves \eqref{eq:mode-bound}.
\end{proof}

\Needspace{9\baselineskip}

\begin{proposition}\label{prop:ones-concentration}
	Recall $M^+(m,n)$ defined in \eqref{mplus}. For every fixed $A>0$, uniformly for integers $|m|\le A\sqrt n$,
	\begin{equation}\label{eq:ones-concentration}
		\MW(m,n)=H_{m,n}M^+(m,n)
		+O_A\!\bigl(p(n)\sqrt{\beta_nL_n}\bigr).
	\end{equation}
\end{proposition}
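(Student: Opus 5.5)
We will write
\[
\MW(m,n)-H_{m,n}M^+(m,n)=\sum_{j\ge j_0}(j-H_{m,n})\,b_{m,j}(n),
\]
and show that the number of ones in a partition of $n$ with crank $m$ concentrates within a window of width about $\sqrt{L_n/\beta_n}$ around $H_{m,n}$.
Put $\beta=\beta_n$, $\rho=e^{-\beta}$ and $|\beta m|\le a:=\pi A/\sqrt6$. Take $J$ as in Lemma~\ref{lem:mode}, so $J=H_{m,n}+O_A(1)$. Fix a window half-width
\[
T:=\Bigl\lceil \kappa\sqrt{L_n/\beta}\,\Bigr\rceil,
\]
with $\kappa$ a large constant to be chosen. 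Split the sum into $|j-J|\le T$ and $|j-J|>T$.

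\textbf{Inner range.} For $|j-J|\le T$ we have $|j-H_{m,n}|\le T+O(1)$. Hence
\[
\Bigl|\sum_{|j-J|\le T}(j-H_{m,n})b_{m,j}(n)\Bigr|\le (T+O(1))\,M^+(m,n).
\]
By Lemma~\ref{lem:counts}, $M^+\le M(m,n)\ll_A\beta p(n)$. So this part is $\ll_A p(n)\beta\sqrt{L_n/\beta}=p(n)\sqrt{\beta L_n}$, which is exactly the error term allowed.

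\textbf{Outer range.} We need a coefficient bound for $b_{m,j}(n)=[q^n]F_{m,j}(q)$ that decays in $|j-J|$.

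The first attempt is the saddle-point bound at $q=\rho$, using nonnegativity of coefficients:
\[
b_{m,j}(n)\le \rho^{-n}F_{m,j}(\rho).
\]
Then \eqref{eq:mode-bound} gives
\[
b_{m,j}(n)\le \rho^{-n}F_{m,J}(\rho)\,e^{-\beta t(t-1)},\qquad t=|j-J|.
\]
Since $F_{m,J}\preceq P$, \eqref{eq:P-bound} yields
\[
b_{m,j}(n)\le Cnp(n)\,e^{-\beta t(t-1)}.
\]
For $t>T$ we have $\beta t(t-1)\ge \tfrac12\kappa^2L_n$ roughly, so the factor is $\le \beta^{\kappa^2/2}$. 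Summing $(t+O(1))e^{-\beta t(t-1)}$ over $t>T$ gives
\[
\ll \beta^{-1}e^{-\beta T^2/2}\ll \beta^{-1}\beta^{\kappa^2/2}.
\]
The total is then $\ll np(n)\beta^{\kappa^2/2-1}=p(n)\beta^{\kappa^2/2-3}$, up to constants. Choosing $\kappa$ large (say $\kappa^2\ge 8$) makes this $O(p(n)\beta)$, which is negligible.

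The index range also includes large $j$ where $j\ge j_0$ but $j$ exceeds the Gaussian regime. These terms are still covered, because \eqref{eq:mode-bound} holds for all $j\ge j_0$ and the quadratic exponent dominates the linear factor $j$. Note also that $b_{m,j}(n)=0$ for $j>n$.

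\textbf{Main obstacle.} The key point is that the crude bound $\rho^{-n}F_{m,J}(\rho)\le Cnp(n)$ loses a polynomial factor $n\asymp\beta^{-2}$ compared with the true size $\beta p(n)$. The proof requires that the Gaussian decay $e^{-\beta t^2}$ beyond $T\asymp\sqrt{L_n/\beta}$ recovers an arbitrary power of $\beta$. This is precisely why the window must carry the factor $\sqrt{L_n}$ and why the error term contains $\sqrt{\beta L_n}$ rather than $\sqrt\beta$.

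I would check carefully that no further loss enters, in particular that $J$ stays within $O(1)$ of $H_{m,n}$ uniformly, so that the inner estimate uses $|j-H_{m,n}|\le T+O_A(1)$. Combining the two ranges proves \eqref{eq:ones-concentration}.
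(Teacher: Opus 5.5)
Your proposal is correct and follows essentially the same route as the paper. The paper also bounds $|\MW(m,n)-H_{m,n}M^+(m,n)|$ by $\sum_{j\ge j_0}|j-H_{m,n}|\,b_{m,j}(n)$. It uses a window of half-width $8\sqrt{L_n/\beta_n}$, centered at $H_{m,n}$ rather than at $J$, where the contribution is at most the window width times $M(m,n)\ll_A\beta_np(n)$. It controls the outside through $b_{m,j}(n)\le\rho^{-n}F_{m,J}(\rho)e^{-\beta t(t-1)}\le Cnp(n)e^{-\beta t(t-1)}$, via Lemma~\ref{lem:mode}, $F_{m,J}\preceq P$ and \eqref{eq:P-bound}. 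The only cosmetic difference is in the tail: the paper bounds each term by $Cnp(n)\beta_n^8$ and multiplies by $\sum_{j\le n}|j-H_{m,n}|\ll n^2$, whereas you sum the Gaussian tail directly.
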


\begin{proof}
	In this proof, abbreviate $\beta=\beta_n$, $L=L_n$, $H=H_{m,n}$, and $\rho=e^{-\beta}$. Set
	$
	D:=8\sqrt{\frac{L}{\beta}}.
	$	
By the triangle inequality, we have the following elementary inequality. If $u_k\ge0$ and only finitely many $u_k$ are nonzero, then for every real $t>0$,
\begin{equation}\label{eq:weighted-deviation}
	\left|\sum_kk u_k-t\sum_ku_k\right|
	\le\sum_k|k-t|u_k.
\end{equation}	
	Applying \eqref{eq:weighted-deviation}  yields
	\begin{align}
		&	\left|\MW(m,n)-HM^+(m,n)\right|\no\\&=	\left|\sum_{j\ge j_0}j\,b_{m,j}(n)-H\sum_{j\ge j_0}b_{m,j}(n)\right|\no\\&\le
		\sum_{j\ge j_0}|j-H|b_{m,j}(n)
		\no\\&=
		\sum_ {\substack{j\ge j_0\\|j-H|\le D}}|j-H|b_{m,j}(n)+\sum_ {\substack{j\ge j_0\\|j-H|> D}}|j-H|b_{m,j}(n)
		\no\\&\le D
		\sum_ {\substack{j\ge j_0\\|j-H|\le D}}b_{m,j}(n)+\sum_ {\substack{j\ge j_0\\|j-H|> D}}|j-H|b_{m,j}(n)\no\\&\le D
		M^+(m,n)+\sum_ {\substack{j\ge j_0\\|j-H|> D}}|j-H|b_{m,j}(n)\no\\	&\le DM(m,n)
		+\sum_ {\substack{j\ge j_0\\|j-H|> D}}|j-H|b_{m,j}(n)\no
		\\&=O_A\!\bigl(p(n)\sqrt{\beta L}\bigr)+\sum_ {\substack{j\ge j_0\\|j-H|> D}}|j-H|b_{m,j}(n),\label{pb1}
		%
	\end{align}
	where the last equality follows from Lemma \ref{lem:counts}.
	
	Next, we estimate $$\sum_ {\substack{j\ge j_0\\|j-H|> D}}|j-H|b_{m,j}(n).$$
	
	Apply Lemma~\ref{lem:mode} with $a=\pi A/\sqrt6$ to obtain the corresponding  index $J\geq j_0$ such that
	\begin{align*}
		J&=H+O_a(1)
		\intertext{and}
		F_{m,j}(\rho)&\le F_{m,J}(\rho)
		\exp\!\bigl(-\beta|j-J|(|j-J|-1)\bigr)
	\end{align*}
	for $j\geq j_0.$ Since $F_{m,J}\preceq P$, nonnegativity of the coefficients and Lemma~\ref{lem:partition} give, for $j\ge j_0$,
	\begin{align}
		b_{m,j}(n)
		&=[q^n]F_{m,j}(q)\no\\&\le\rho^{-n}F_{m,j}(\rho)\no\\&\le \rho^{-n} F_{m,J}(\rho)
		\exp\!\bigl(-\beta|j-J|(|j-J|-1)\bigr)\notag\\
		&\le\rho^{-n}P(\rho)e^{-\beta t(t-1)}\no\\&
		\le Cnp(n)e^{-\beta t(t-1)}\label{eq:ones-coefficient-bound}
	\end{align}
	with $t:=|j-J|.$
	As $J-H=O_A(1)$, if $|j-H|>D\to\infty$ as $n\to\infty$ and $n$ exceeds a threshold depending only on $A$, then
	\[
	t\ge D/2\ge2,\qquad
	\beta t(t-1)\ge\frac{\beta D^2}{8}=8L.
	\]
	It follows from \eqref{eq:ones-coefficient-bound} that
	\begin{equation}\no
		b_{m,j}(n)\le Cnp(n)\beta^8\qquad(|j-H|>D).
	\end{equation}
	Thus  \begin{align}
		&	\sum_{\substack{j\ge j_0\\|j-H|>D}}|j-H|b_{m,j}(n)\no\\&=	\sum_{\substack{n\ge j\ge j_0\\|j-H|>D}}|j-H|b_{m,j}(n)
		\no\\&\le Cnp(n)\beta^8	\sum_{\substack{n\ge j\ge j_0\\|j-H|>D}}|j-H|
		\no\\&	\ll n^3p(n)\beta^8 \no\\&\ll p(n)\beta^2.\label{pb}
	\end{align}
	Substituting  \eqref{pb} into \eqref{pb1} yields
	\begin{align*}
		\left|\MW(m,n)-HM^+(m,n)\right|
		&=O_A\!\bigl(p(n)\sqrt{\beta L}\bigr).
	\end{align*}
	This proves \eqref{eq:ones-concentration}.
\end{proof}

We are now in a position to prove \eqref{eq:MW-beta}.
\begin{proof}[Proof of \eqref{eq:MW-beta}.]
	For $|m|\le A\sqrt n$, Lemma~\ref{lem:no-ones}, with $D=1$ in \eqref{eq:no-ones-negligible}, and Proposition~\ref{prop:ones-concentration} give
	\begin{align*}
		\MW(m,n)&=H_{m,n}M(m,n)
		+O_A\!\bigl(p(n)\sqrt{\beta_nL_n}\bigr)\\
		&=\frac{p(n)}4\log(1+e^{-\beta_nm})\sech^2(\beta_nm/2)
		\left(1+O_A(n^{-1/2})\right)
		+O_A\!\bigl(p(n)\sqrt{\beta_nL_n}\bigr)\\
		&=\frac{p(n)}4\log(1+e^{-\beta_nm})\sech^2(\beta_nm/2)
		+O_A\!\bigl(p(n)\sqrt{\beta_nL_n}\bigr)\\
		&=\frac{p(n)}4\log(1+e^{-\beta_nm})\sech^2(\beta_nm/2)
		+O_A\!\bigl(n^{-1/4}p(n)\sqrt{\log n}\bigr).
	\end{align*}
\end{proof}

\Needspace{16\baselineskip}
\section{Proof of \eqref{eq:NT-main}}

For nonnegative integers $a,b$, let $Q_{a,b}(q)$ be the generating function for partitions with at most $a$ parts, each at most $b$.
\begin{lemma}\label{lem:rectangle}
Let $m\ge0$ be an integer. Denote by $a_{m,k}(n)$ the number of partitions of $n$ with rank $m$ and exactly $k\ge1$ parts, and set $h_{m,k}=2k+m-1$. Then
\begin{equation}\label{eq:rectangle}
a_{m,k}(n)=\coeff{n-h_{m,k}}Q_{k-1,k+m-1}(q),
\end{equation}
and
\begin{equation}\label{eq:rectangle-bound}
Q_{k-1,k+m-1}(q)\preceq(q;q)_{k-1}^{-1}\preceq P(q).
\end{equation}
Moreover, we have
\begin{align}\label{eq:rank-sums1}
N(m,n)&=\sum_{k\ge1}a_{m,k}(n)
\intertext{and}
\NT(m,n)&=\sum_{k\ge1}k\,a_{m,k}(n).\label{eq:rank-sums}
\end{align}
\end{lemma}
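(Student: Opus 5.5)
A partition with rank $m\ge0$ and exactly $k\ge1$ parts has largest part $\lambda_1=k+m$. I will build an explicit bijection that removes a hook and leaves a partition fitting in a box; \eqref{eq:rectangle} then follows by counting, and the remaining claims are bookkeeping.

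\textbf{Step 1: the hook bijection.} Let $\lambda=(\lambda_1,\ldots,\lambda_k)$ be a partition of $n$ with $k$ parts and rank $m$, so $\lambda_1=k+m$. Remove the first row and subtract $1$ from each of the remaining $k-1$ parts. In Ferrers-diagram terms this deletes the hook consisting of the first row and first column. The hook contains $\lambda_1+(k-1)=2k+m-1=h_{m,k}$ cells. What remains is the partition
\[
\mu=(\lambda_2-1,\ldots,\lambda_k-1),
\]
after dropping zeros. It has at most $k-1$ positive parts, each at most $\lambda_1-1=k+m-1$, and $|\mu|=n-h_{m,k}$.

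Conversely, take any $\mu$ with at most $k-1$ parts, each at most $k+m-1$. Pad $\mu$ with zeros to length $k-1$, add $1$ to each entry, and prepend a part $k+m$. This produces a partition of $|\mu|+h_{m,k}$ with exactly $k$ parts and largest part $k+m$, hence rank $m$. The prepended part is valid because $k+m\ge(k+m-1)+1$, which uses $m\ge0$. When $k=1$ the box is empty and $\lambda=(m+1)$ is the only option; this requires $m+1\ge1$, which holds. The two maps are mutually inverse, so $a_{m,k}(n)$ equals the coefficient of $q^{n-h_{m,k}}$ in $Q_{k-1,k+m-1}(q)$, with the usual convention that the coefficient is $0$ when $n<h_{m,k}$. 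This proves \eqref{eq:rectangle}.

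\textbf{Step 2: the coefficientwise bounds.} Dropping the restriction that parts are at most $k+m-1$ shows that the partitions counted by $Q_{k-1,k+m-1}$ form a subset of those with at most $k-1$ parts. By conjugation, the latter have generating function $1/(q;q)_{k-1}$. Partitions into parts of size at most $k-1$ are in turn a subset of all partitions, generated by $P(q)$. Both containments preserve the weight $n$, so they give \eqref{eq:rectangle-bound} coefficientwise.

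\textbf{Step 3: the sums.} Every partition of $n\ge1$ with rank $m$ has some number $k\ge1$ of parts. Splitting according to $k$ gives \eqref{eq:rank-sums1}. Weighting each class by its number of parts $k$ gives \eqref{eq:rank-sums}. Only finitely many $k$ (those with $k\le n$) contribute.

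The only delicate point is the boundary bookkeeping in Step 1: the cases $k=1$ and $n<h_{m,k}$, and the fact that $m\ge0$ is exactly what makes $k+m\ge1$ and the prepended row valid. Apart from that, the argument is routine.
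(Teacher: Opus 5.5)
Your proposal is correct and matches the paper's proof: the same hook-removal bijection $\lambda\mapsto(\lambda_2-1,\ldots,\lambda_k-1)$ into a $(k-1)\times(k+m-1)$ box, conjugation for \eqref{eq:rectangle-bound}, and splitting by the number of parts for the two sums. One small quibble: $k+m\ge(k+m-1)+1$ is an identity and does not use $m\ge0$; the hypothesis $m\ge0$ is needed only to guarantee $k+m-1\ge0$ (equivalently, that the prepended part $k+m$ is positive), as your closing remark correctly says.
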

\begin{proof}
If a partition $\lambda$ has rank $m$ and $k$ parts, then $\lambda_1=k+m$. Remove its largest part and subtract $1$ from each of the remaining $k-1$ parts. The resulting sequence
\[
\nu_i=\lambda_{i+1}-1\qquad(1\le i\le k-1)
\]
has at most $k-1$ positive parts, each at most $k+m-1$. Conversely, pad such a partition $\nu$ with zeros to obtain $k-1$ entries. The original partition is uniquely recovered as
\[
\lambda=(k+m,\nu_1+1,\ldots,\nu_{k-1}+1).
\]
The identity $|\lambda|=h_{m,k}+|\nu|$ proves \eqref{eq:rectangle}.

By conjugation, $(q;q)_{k-1}^{-1}$ enumerates partitions with at most $k-1$ parts. This proves \eqref{eq:rectangle-bound}. 

Equations \eqref{eq:rank-sums1} and \eqref{eq:rank-sums} follow immediately from the definitions.
\end{proof}

\begin{proposition}\label{prop:rank-concentration}
For every fixed $A>0$, uniformly for integers $0\le m\le A\sqrt n$,
\begin{equation}\label{eq:rank-concentration}
\NT(m,n)=K_nN(m,n)+O_A\!\bigl(p(n)\log L_n\bigr).
\end{equation}
\end{proposition}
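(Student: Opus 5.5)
The plan is to mimic Proposition~\ref{prop:ones-concentration}, with the number of parts $k$ playing the role of the number of ones. Write $\beta=\beta_n$, $L=L_n$, $K=K_n$, $\rho=e^{-\beta}$, and set
\[
T:=\frac{\log L}{\beta}.
\]
By \eqref{eq:rank-sums1}, \eqref{eq:rank-sums} and the elementary inequality \eqref{eq:weighted-deviation},
\[
\bigl|\NT(m,n)-KN(m,n)\bigr|\le\sum_{k\ge1}|k-K|\,a_{m,k}(n).
\]
I split this sum into three ranges: the central range $|k-K|\le T$, the upper tail $k>K+T$, and the lower tail $k<K-T$.

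\textbf{Central range.} The contribution is at most $T\,N(m,n)$. By Lemma~\ref{lem:counts} this is $\ll_A T\beta p(n)=p(n)\log L$, which is exactly the claimed error term.

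\textbf{Upper tail.} Here the point is not to use the crude saddle bound $\rho^{-n}P(\rho)\le Cnp(n)$. That bound loses a factor $\beta^{-2}$ and would ruin the estimate. Instead I use \eqref{eq:rectangle} and \eqref{eq:rectangle-bound} together with Lemma~\ref{lem:partition} directly:
\[
a_{m,k}(n)\le p(n-h_{m,k})\le Cp(n)e^{-\beta h_{m,k}}\ll_A p(n)e^{-2\beta k}\qquad(h_{m,k}\le n/2),
\]
since $\beta m\ll_A1$. If $h_{m,k}>n/2$, then \eqref{eq:ratio-far} gives $a_{m,k}(n)\le Cp(n)e^{-c\sqrt n}$; summed against $k\le n$, these terms are negligible.

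Now parametrize $k=K+x/\beta$ with $x\ge\log L$, so that $e^{-2\beta k}=\beta^2e^{-2x}$. Summing over the roughly $\beta^{-1}\,dx$ integers in each unit of $x$, and bounding $|k-K|\le k$, the tail is
\[
\ll p(n)\int_{\log L}^\infty\frac{L+x}{\beta}\,\beta^2e^{-2x}\,\frac{dx}{\beta}\ll p(n)\,L\,e^{-2\log L}\ll p(n),
\]
which is well within $p(n)\log L$.

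\textbf{Lower tail.} For $k<K-T$ the restriction to at most $k-1$ parts gives strong decay. By \eqref{eq:rectangle-bound} and nonnegativity of coefficients,
\[
a_{m,k}(n)\le\rho^{-(n-h_{m,k})}(\rho;\rho)_{k-1}^{-1}
=\rho^{-n}\rho^{h_{m,k}}P(\rho)\prod_{s\ge k}(1-\rho^s)
\le Cnp(n)\exp\!\Bigl(-\frac{\rho^{k}}{1-\rho}\Bigr),
\]
using \eqref{eq:P-bound}. With $k=K-x/\beta$ we have $\rho^k=\beta e^{x}$, so the exponential factor is at most $\exp(-e^{x})$. Since $x\ge\log L$, this is at most $e^{-L}$. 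I would enlarge the cutoff to $x\ge\log(8L)$ if needed to get a bound $\le\beta^{8}$, as in the proof of Proposition~\ref{prop:ones-concentration}. Summing $|k-K|\le K$ over at most $K$ values of $k$ then gives $\ll nK^2\beta^8p(n)=o(p(n))$. Any intermediate band $\log L\le x\le\log(8L)$ has width $O(1/\beta)$ and can be absorbed into the central range at cost $O(p(n))$.

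\textbf{Conclusion and main obstacle.} Adding the three contributions yields \eqref{eq:rank-concentration}. The step I expect to be the main obstacle is the upper tail. The naive saddle-point bound through \eqref{eq:P-bound} loses polynomial factors in $\beta$. The cure is to use the sharp ratio estimate \eqref{eq:ratio-near}, with the shift $h_{m,k}\approx2k$ supplying the decay $\beta^2e^{-2x}$. One must also track uniformity in $0\le m\le A\sqrt n$, which enters only through $\beta m\le a_A$ in $h_{m,k}$ and in Lemma~\ref{lem:counts}.
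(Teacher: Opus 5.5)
Your proposal is correct and follows essentially the same route as the paper: the same three-range split around $K$, the central range bounded by (window width)$\cdot N(m,n)$ via Lemma~\ref{lem:counts}, the lower tail via the saddle bound $Cnp(n)\exp\!\bigl(-\rho^k/(1-\rho)\bigr)$, and the upper tail via the ratio estimates \eqref{eq:ratio-near} and \eqref{eq:ratio-far} rather than \eqref{eq:P-bound}. The only difference is cosmetic: the paper uses the single window $\Delta=4\log L/\beta$, which makes the lower tail $\le Cnp(n)e^{-L^4}$ immediately, whereas you take $\log L/\beta$ and then widen the lower side to $\log(8L)/\beta$, which costs only an extra $O(p(n))$.
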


\begin{proof}
Write
$
\beta=\beta_n=\frac{\pi}{\sqrt{6n}}, L=L_n=\log \frac{1}{\beta_n}, K=K_n=\frac{L_n}{\beta_n},
\Delta=\frac{4\log L}{\beta},$ $h_k=2k+m-1,
a_k=a_{m,k}(n)
$ which is the number of partitions of $n$ with rank $m$ and exactly $k\ge1$ parts.
Take $n$ sufficiently large that $L-4\log L>1$.

It follows from \eqref{eq:weighted-deviation} that
\begin{align}
&	\left|\NT(m,n)-KN(m,n)\right|\no\\&= \left|\sum_{k\ge1}k\,a_k-K\sum_{k\ge1}a_k\right|\qquad\qquad\qquad\textrm{(by \eqref{eq:rank-sums1} and \eqref{eq:rank-sums})}\no\\
\no	&\le\sum_{k\ge1}|k-K|a_k\\
	&	=\sum_{k< K-\Delta}|k-K|a_k+\sum_{K-\Delta\le k\le K+\Delta}|k-K| a_k+\sum_{k>K+\Delta}|k-K| a_k\no\\
	&=:S_1+S_2+S_3.\label{s123}
\end{align}


We first consider $S_1.$ Let $\rho:=e^{-\beta}<1.$
 If $n-h_k\ge0$, coefficient nonnegativity, \eqref{eq:rectangle} and \eqref{eq:rectangle-bound} give
\begin{align}\label{ak}
a_k=\coeff{n-h_{k}}Q_{k-1,k+m-1}(q)\le\coeff{n-h_k}(q;q)_{k-1}^{-1}
\le\frac{\rho^{-(n-h_k)}}{(\rho;\rho)_{k-1}}
\le\frac{\rho^{-n}}{(\rho;\rho)_{k-1}}.
\end{align}
 If $n-h_k<0$, then $a_k=0$, and the same upper bound remains valid. The inequality $\log(1-u)\le-u$ for $0<u<1$ implies
\begin{align}
(\rho;\rho)_{k-1}^{-1}
\no&=P(\rho)\prod_{r\ge k}(1-\rho^r)
\\&=P(\rho)\exp\left\{\log \left(\prod_{r\ge k}(1-\rho^r)\right)\right\}\no
\\&=P(\rho)\exp\left\{\sum_{r\geq k}\log \left(1-\rho^r\right)\right\}\no
\\&\le P(\rho)\exp\left\{-\sum_{r\geq k}\rho^r\right\}\no\\
&= P(\rho)\exp\!\left(-\frac{\rho^k}{1-\rho}\right).\no
\end{align}
This together with \eqref{ak} gives
\begin{align}
	a_k
	\le \rho^{-n}P(\rho)\exp\!\left(-\frac{\rho^k}{1-\rho}\right)\le Cnp(n)\exp\!\left(-\frac{\rho^k}{1-\rho}\right),\no
\end{align}
where the second inequality follows from \eqref{eq:P-bound}.
For $k\le K-\Delta$, using $1-e^{-\beta}\le\beta$, we obtain
\[
\frac{\rho^k}{1-\rho}
\ge\frac{e^{-\beta(K-\Delta)}}{\beta}
=\frac{e^{-L+4\log L}}{\beta}=L^4.
\]
Thus $a_k\le Cnp(n)e^{-L^4}$, and hence
\begin{equation}\label{s1}
S_1=\sum_{k< K-\Delta}|k-K|a_k\le K\sum_{k\le K-\Delta}a_k\le CnK^2p(n)e^{-L^4}=o(p(n)).
\end{equation}
Indeed, $n=\pi^2/(6\beta^2)$ and $K=L/\beta$, so
$nK^2e^{-L^4}\ll L^2e^{4L-L^4}\to0,$ as $n\to\infty$. 


For $S_2,$ we have \begin{align}\label{s2}
S_2=\sum_{K-\Delta\le k\le K+\Delta}|k-K| a_k\le \Delta\sum_{K-\Delta\le k\le K+\Delta} a_k\le \Delta N(m,n)=O_A\!\bigl(p(n)\log L\bigr)
\end{align}
by Lemma \ref{lem:counts}.

Next, we consider $S_3.$
Note that 
\begin{align}
	S_3&=\sum_{k>K+\Delta}|k-K| a_k\no\\&\le \sum_{k>K+\Delta}k a_k\no\\&\le \sum_{\substack{k>K+\Delta\\h_k\le n/2}}kp(n-h_k)+\sum_{\substack{k>K+\Delta\\n/2<h_k\le n}}k p(n-h_k)\label{ps3}
\end{align}
by \eqref{eq:rectangle} and \eqref{eq:rectangle-bound}. 

If $h_k\le n/2$, then \eqref{eq:ratio-near} yields
\[
p(n-h_k)\le Cp(n)e^{-\beta(2k+m-1)}
\le Cp(n)e^\beta e^{-2\beta k}.
\]
Thus \begin{align}
 \sum_{\substack{k>K+\Delta\\h_k\le n/2}}kp(n-h_k)\ll p(n)\sum_{k>K+\Delta}k e^{-2\beta k}.\no
\end{align}
For every real $T\ge1$, summing the geometric series and its derivative, with integer lower endpoint $\lfloor T\rfloor+1$, gives
\[
\sum_{k>T}k e^{-2\beta k}
\ll\beta^{-2}(1+\beta T)e^{-2\beta T}
\qquad(0<\beta\le1/2).
\]
Take $T=K+\Delta$. Since
\[
e^{-2\beta T}=e^{-2L}L^{-8}=\beta^2L^{-8},
\qquad 1+\beta T=1+L+4\log L\ll L,
\]
we have $$\sum_{k>K+\Delta}k e^{-2\beta k}\ll L^{-7}$$
Thus 
\begin{align}
 \sum_{\substack{k>K+\Delta\\h_k\le n/2}}kp(n-h_k)	\ll p(n)L^{-7}.\label{ps31}
\end{align}

If $n/2<h_k\le n$, \eqref{eq:ratio-far} gives $p(n-h_k)\le Cp(n)e^{-c\sqrt n}$ . 
Then 
\begin{align}
\sum_{\substack{k>K+\Delta\\n/2<h_k\le n}}k p(n-h_k)\le Cp(n)e^{-c\sqrt n}\sum_{\substack{k>K+\Delta\\n/2<h_k\le n}}k =O\left(n^2p(n)e^{-c\sqrt n}\right)\label{ps32}
\end{align}
since the condition $h_k\le n$ implies $k\le(n+1)/2$. 

Substituting \eqref{ps31} and \eqref{ps32} into \eqref{ps3} yields 
\begin{align}\label{s3}
	S_3&\ll p(n)L^{-7}+n^2p(n)e^{-c\sqrt n}=o(p(n)).
\end{align}

Substituting \eqref{s1}, \eqref{s2} and \eqref{s3} into \eqref{s123} gives \begin{align*}
	\left|\NT(m,n)-KN(m,n)\right|
=o(p(n))+O_A\!\bigl(p(n)\log L\bigr).
\end{align*}
 This proves \eqref{eq:rank-concentration}.
\end{proof}

We are now in a position to prove \eqref{eq:NT-main}.
\begin{proof}[Proof of \eqref{eq:NT-main}]
First suppose $0\le m\le A\sqrt n$. Substituting \eqref{eq:rank-count} into Proposition~\ref{prop:rank-concentration}, we obtain
\begin{align}
\NT(m,n)
&=\frac{p(n)L_n}{4}\sech^2\!\left(\frac{\beta_nm}{2}\right)
\left(1+O_A\left(n^{-1/2}\right)\right)
+O_A\!\bigl(p(n)\log L_n\bigr)\no\\
&=\frac{p(n)L_n}{4}\sech^2\!\left(\frac{\beta_nm}{2}\right)
+O_A\!\bigl(p(n)\log L_n\bigr)\label{eq:NT-main1}
\end{align}
since $\sech^2(\beta_nm/2)\le1$ and
$L_n n^{-1/2}=o(\log L_n)$.

For $m=-r<0$, consider conjugation of Ferrers diagrams. A partition of rank $r$ with $k$ parts has largest part $k+r$; its conjugate has rank $-r$ and $k+r$ parts. Consequently,
\begin{equation}\no
N(-r,n)=N(r,n),\qquad
\NT(-r,n)-\NT(r,n)=rN(r,n).
\end{equation}
For $r\le A\sqrt n$, Lemma~\ref{lem:counts} gives
\[
rN(r,n)\ll_A r\beta_np(n)\ll_A p(n).
\]
Since $\log L_n\to\infty$, this $O_A(p(n))$ term is absorbed into the error in \eqref{eq:NT-main1}. Since $\sech^2$ is even, \eqref{eq:NT-main1} follows uniformly throughout $|m|\le A\sqrt n$.

Finally,
\[
 L_n=\frac12\log n+\log\frac{\sqrt6}{\pi},
\qquad
\log L_n=\log\log n+O(1).
\]
These identities together with \eqref{eq:NT-main1} give \eqref{eq:NT-main}.
\end{proof}

\section*{Acknowledgements}
This work was supported by the National Natural Science Foundation of China
(Grant No. 12471315).

\end{document}